\documentclass{article}
\usepackage{amsmath,amssymb,amsthm, tikz-cd}
\usepackage{graphicx}
\usepackage{enumitem}
\usepackage[margin=1in]{geometry}
\usepackage{dutchcal}
\usepackage{colonequals}
\usepackage[
backend=biber,
style=alphabetic,
sorting=ynt
]{biblatex}
\usepackage[colorlinks,  citecolor=black, linkcolor = black, urlcolor = black]{hyperref} 
\usepackage{forest}
\usepackage{thmtools}

\usepackage{stmaryrd}
\usepackage{tikz-cd}
\usepackage[nameinlink]{cleveref}

\usepackage{algorithm}
\usepackage{algpseudocode}
\algdef{SE}[SUBALG]{Indent}{EndIndent}{}{\algorithmicend\ }%
\algtext*{Indent}
\algtext*{EndIndent}

\theoremstyle{plain}
\newtheorem{theorem}{Theorem}[section]
\newtheorem{corollary}[theorem]{Corollary}

\newtheorem{proposition}[theorem]{Proposition}
\theoremstyle{definition}
\newtheorem{remark}[theorem]{Remark}

\newtheorem{example}[theorem]{Example}

\newtheorem{definition}[theorem]{Definition}

\setlist[enumerate]{label = (\roman*), topsep=0pt, itemsep=0pt}

\newlist{enuroman}{enumerate}{1}
\setlist[enuroman]{
	label = (\roman*),
	topsep = 0pt,
	labelindent = \parindent}
	
\newlist{enualph}{enumerate}{2}
\setlist[enualph]{
	label = (\alph*),
	topsep = 0pt,
	labelindent = \parindent}

\newlist{enuarabic}{enumerate}{3}
\setlist[enuarabic]{
	label = (\arabic*),
	topsep = 0pt,
	labelindent = \parindent}

\setlist[itemize]{label = $\diamond$, topsep=0pt, itemsep=0pt, labelindent=0pt}

\newcommand{\Z}{\mathbb{Z}}

\newcommand{\arrow}{arrow}
\newcommand{\F}{\mathbb{F}}

\newcommand{\N}{\mathbb{N}}

\newcommand{\fm}{\mathfrak{m}}

\newcommand{\ceq}{\colonequals}
\newcommand{\coloneqq}{\colonequals}

\DeclareMathOperator{\height}{ht}

\renewcommand{\phi}{\varphi}
\renewcommand{\to}{\longrightarrow}

\newcommand\blfootnote[1]{%
  \begingroup
  \renewcommand\thefootnote{}\footnote{#1}%
  \addtocounter{footnote}{-1}%
  \endgroup
}

\newcommand{\directory}[1]{
\begin{forest}
  for tree={
    font=\ttfamily,
    grow'=0,
    child anchor=west,
    parent anchor=south,
    anchor=west,
    calign=first,
    edge path={
      \noexpand\path [draw, \forestoption{edge}]
      (!u.south west) +(7.5pt,0) |- node[fill,inner sep=1.25pt] {} (.child anchor)\forestoption{edge label};
    },
    before typesetting nodes={
      if n=1
        {insert before={[,phantom]}}
        {}
    },
    fit=band,
    before computing xy={l=15pt},
  }
#1
\end{forest}
}

\let\cref\Cref
\let\autoref\cref

\usepackage{color}
\definecolor{chianti}{rgb}{0.6,0,0}
\definecolor{meretale}{rgb}{0,0,.6}
\definecolor{leaf}{rgb}{0,.35,0}

\begin{document}

\title{The WittVectors package for Macaulay2}
\author{Anne Fayolle, Abhay Goel, Devlin Mallory, Eamon Quinlan-Gallego, Teppei Takamatsu}
\maketitle

\blfootnote{
We would like to thank Karl Schwede for putting our team together and showing us how to work with Macaulay2.
We would also like to thank Kevin Tucker, Jakub Witaszek, and Nawaj KC for useful conversations on this topic. This project started during the academic year 2023-2024 as part of an RTG seminar under NSF RTG Grant No.~1840190. 
Fayolle was supported by an NSERC doctoral grant, a Simons dissertation fellowship and Simons Foundation SFI-MPS-MOV-00006719-07, 
Goel was supported by the RTG Grant No.~1840190,
Mallory was supported by the National Science Foundation under the RTG Grant No.~1840190, and by EUR2023-143443 funded by MCIN/AEI/10.13039/501100011033, 
as well as funding from the European Union's Horizon Europe research and innovation programme under the Marie Skłodowska-Curie grant agreement No 101202625.
Quinlan-Gallego was supported by an NSF postdoctoral fellowship \#2203065, the RTG grant \#2037569, and the Ram\'on y Cajal Fellowship RYC2024-049057-I.
Takamatsu was supported by JSPS KAKENHI Grant number JP25K17228.
}

\begin{abstract}

We implement a Macaulay2 package for computations involving rings of truncated Witt vectors $W_n(R)$ of a finitely generated $\F_p$-algebra $R$.  This package includes ring operations (addition, multiplication, Frobenius, Verschiebung, etc.) on elements of $W_n(R)$,
    conversion between tuple representatives and ghost map representatives of elements of truncated Witt vectors over polynomial rings, and
   explicit computation of $W_n(R)$ as a finite-type algebra over $\Z/p^n$. 
This functionality is based on an algorithm we develop for performing arithmetic operations in rings of finite length $p$-typical Witt vectors over finitely generated $\F_p$-algebras. 
In addition, we implement an algorithm to calculate lifts of Frobenius from $\F_p$-algebras to flat lifts over $\Z/p^2\Z$, and an algorithm to find the quasi-$F$-splitting height of a local or graded complete intersection ring.
\end{abstract}

\section{Introduction}

Fix a prime number $p > 0$.
Given a commutative ring $R$ and an integer $n \geq 1$, we let $W_{n}(R)$ be the ring of $p$-typical Witt vectors of length $n$ over $R$. 
Recall that, as a set, we have $W_n(R) = R \times R \times \cdots \times R$ ($n$ copies), with addition and multiplication defined by certain universal polynomials (see \cite[Section 8.10]{JacobsonBasicAlgebraII} for more details about these universal polynomials). Using these universal polynomials, one can write down explicitly how addition and multiplication work in $W_n(R)$. 
For the following explicit formulas, we assume that $R$ is an $\F_p$-algebra. For \(n=2\), we have

\[
(x_0,y_0)+(x_1,y_1)=\left(x_0+x_1, y_0+y_1-\frac{(x_0+x_1)^p-x_0^p-x_1^p}{p}\right)
\]
and 
\[
(x_0,y_0)(x_1,y_1)=\left(x_0x_1, x_0^py_1+x_1^py_0+py_0y_1\right).
\]
For $n=3$, we have 
\begin{align*}
(x_0,y_0,z_0)+(x_1,y_1,z_1)&=\Bigg(x_0+x_1, y_0+y_1-\frac{(x_0+x_1)^p-x_0^p-x_1^p}{p},\\
&z_0+z_1+\frac{1}{p}\left(y_0^p+y_1^p-\left(y_0+y_1+\frac{x_0^p+x_1^p-(x_0+x_1)^p}{p}\right)^p\right)+\frac{x_0^{p^2}+x_1^{p^2}-(x_0+x_1)^{p^2}}{p^2}\Bigg)
\end{align*}
and 
\[
(x_0,y_0,z_0)(x_1,y_1,z_1)=\left(x_0x_1, x_0^py_1+x_1^py_0+py_0y_1,z_0x_1^{p^2}+y_0^py_1^p+x_0^{p^2}z_1+\frac{y_0^px_1^{p^2}+x_0^{p^2}y_1^p-(y_0x_1^p+x_0^py_1)^p}{p}\right).
\]
As one can see from these examples, the complexity of these operations increases dramatically as $n$ gets bigger, and this makes computations quite difficult in $W_n(R)$ for  $n>2$. 
Indeed, historically, even computations with Witt vectors of length $3$ have been challenging, see for instance \cite{FinottiComputationsWittVectorsLength3}. To overcome these difficulties, we use Illusie's  \cite{IllusieComplexeDeRhamWittCristalline} presentation of the ring of Witt vectors of $\F_p[x_1,\dots,x_n]$ as a subalgebra of a polynomial ring over $\Z/p^n$ (see \autoref{thm-witt-presentation-poly}) to reduce to computations in a polynomial ring. 

This package implements the following basic capabilities for dealing with (truncated) rings of Witt vectors of a finitely generated $\F_p$-algebra $R$:
\begin{enumerate}
    \item Ring operations (addition, multiplication, Frobenius, Verschiebung, etc.) on elements of $W_n(R)$, represented as tuples of length $n$. 
    \item Conversion between tuple representatives and ghost map representatives of elements of $W_n(S)$ for $S$ a polynomial ring.
    \item Explicit computation of $W_n(R)$ as a finite-type algebra over $\Z/p^n$. 
    \item Computation of the  maps $W_n(R)\to W_n(R')$ induced by a ring map $R\to R'$.
\end{enumerate}

Using these Witt vector computations, we developed an algorithm that gives constraints to the possible lifts of Frobenius on a specific lift $B$ of a given ring $A$ of characteristic $p$ to characteristic $p^2$, see \autoref{section frobeniuslift}. We also implemented an algorithm that allows us to compute the quasi-$F$-splitting height of a complete intersection using the Fedder-type criterion of \cite{KawakamiTakamatsuYoshikawaFedderQuasi}, see \autoref{section quasiFSplitting}.

\begin{remark}
   We began our project in Fall 2023 and since then, a similar algorithm has been developed independently by Muñoz-\relax-Bertrand in Sage \cite{MunozBertrandFasterComputationWittVectors}. Beyond the additional capabilities of this package (e.g., explicit calculations of rings of Witt vectors, applications to quasi-$F$-splitting and Frobenius lifts) we believe it is useful to have addition and multiplication implemented in Macaulay2 due to the increasing use of Witt vectors by commutative algebraists. 
\end{remark}

\section{Finite-length Witt vectors as finitely generated algebras}

We start with some background on rings of Witt vectors. The material in this section is well-known, so we omit many proofs; the reader is invited to consult  \cite[Chapter 2, section 6]{SerreLocalFields} and \cite[Section 8.10]{JacobsonBasicAlgebraII} for more thorough expositions.
All rings considered here are unitary.

Witt \cite{WittZyklischeCharp} showed that for a commutative ring $R$ there is a naturally associated ring of Witt vectors $W(R)$.
Although there are many ways to think about this ring, including as a universal $\delta$-ring (see \cite{JoyalDeltaAnneauWitt}), we will focus on the $p$-typical Witt vectors with the Witt coordinates (as opposed to Joyal or ghost coordinates). 
That is, as a set,
\[
W(R)=R\times R \times R \times \ldots.
\]
 As for the ring structure, it is described by certain universal polynomials on the coordinates of elements of $W(R)$: $S_0, S_1, S_2,\ldots$ and $P_0, P_1, P_2,\ldots$. More precisely for $r=(r_0,r_1,r_2,\ldots)$ and $s=(s_0,s_1,s_2,\ldots)$ in $W(R)$, 
\[
r+s=(S_0(r,s),S_1(r,s),S_2(r,s),\ldots)
\]
and 
\[
r\cdot s=(P_0(r,s),P_1(r,s),P_2(r,s),\ldots).
\] 
Instead of working with infinite tuples, one can instead work with the \emph{truncated ring of $p$-typical Witt vectors} $W_n(R)$ for $n$ a positive integer, also known as the \emph{ring of Witt vectors of length $n$}. This is the truncation of $W(R)$ to its first $n$ coordinates so an element of $W_n(R)$ looks like $(r_0,r_1,r_2,\ldots,r_n)$ for $r_i\in R$. 
Mathematically, $W_n(R)$ is a quotient ring of $W(R)$ by the ideal of tuples with zeroes in the first $n$ coordinates;
addition and multiplication are thus truncation of the same operations in $W(R)$. More precisely if $(r_0,r_1,r_2,\ldots,r_{n-1})$ and $(s_0,s_1,s_2,\ldots,s_{n-1})\in W_n(R)$, then we can choose lifts $r\coloneqq (r_0,r_1,r_2,\ldots,r_{n-1}, 0, 0, \ldots)$ and $s\coloneqq (s_0,s_1,s_2,\ldots,s_{n-1}, 0, 0, \ldots)$  in $W(R)$ and so  define 
\[
(r_0,r_1,r_2,\ldots,r_{n-1})+(s_0,s_1,s_2,\ldots,s_{n-1})\coloneqq (S_0(r,s),S_1(r,s),S_2(r,s),\ldots S_{n-1}(r,s))
\]
and 
\[
(r_0,r_1,r_2,\ldots,r_{n-1})\cdot(s_0,s_1,s_2,\ldots,s_{n-1})\coloneqq (P_0(r,s),P_1(r,s),P_2(r,s),\ldots,P_{n-1}(r,s)).
\]
Note that $W(R)$ is the projective limit of $W_n(R)$ as $n\to \infty$ and $W_1(R)=R$. 

The rings of Witt vectors come with two main operations: the Verschiebung, (shift in German) and the Frobenius (which has a particularly clean description when $R$ has characteristic $p$).

\begin{definition}[Verschiebung]
Let $m\in \N_{>0}$ be arbitrary. 
We define 
\[
V^m\colon W_n(R)\to W_{n+m}(R)\quad (r_0,r_1,r_2,\ldots,r_{n-1})\mapsto (0,\ldots, 0, r_0,r_1,r_2,\ldots,r_{n-1}),
\]
with $m$ zeros. We write $V$ when $m=1$. Note that this is an additive map.
\end{definition}

\begin{definition}[Frobenius]
Let $R$ be commutative of characteristic $p>0$. For $n=0,1,2,\ldots,\infty$, the Frobenius on $W_n(R)$ is $F\colon W_n(R)\to W_n(R)$, $(r_0,r_1,r_2,\ldots)\mapsto (r_0^p,r_1^p,r_2^p,\ldots)$.
\end{definition}

Frobenius and Verschiebung commute with each other, and their composition is equal to multiplication by $p$ on $W_n(R)$.

Since truncated Witt vectors $W_n(R)$ form a ring, one can discuss ideals. Two types of ideals that arise naturally are 
\[
W_n(I):=\{(r_0,\ldots,r_{n-1})\in W_n(R):r_i\in I\text{ for all }i\}.
\]
for an ideal $I$ of $R$, and kernels of the truncation maps $W_{n+m}(R) \to W_n(R)$, or equivalently the images of Verschiebung maps 
\[
\ker\bigl(W_{n+m}(R)\to W_{n}(R)\bigr)=V^nW_{m}(R)
\]
for $n,m=0,1,2,\ldots,\infty$.
These are of course not the only types of ideals; one can specify a set of generators to give an ideal, which we do in our implementation (see section \ref{section witt ideals} for more details).

\begin{definition}
For any commutative ring $R$ and any $n\geq 0$, $W_{n+1}(R)$ is equipped with a natural ring map to $R$, the $n$-th \emph{ghost polynomial} 
\begin{equation}\label{ghost map def}
w_n(x_0,\ldots,x_n)
=
x_0^{p^n}+px_1^{p^{n-1}}+\cdots+p^{n-1}x_{n-1}^p+p^nx_n.
\end{equation}
This gives rise to the \emph{ghost map} $\operatorname{gh}:W(R)\longrightarrow R^{\mathbb N}$, given by
\[
\operatorname{gh}(r_0,r_1,r_2,\ldots)=\bigl(w_0(r_0),w_1(r_0,r_1),w_2(r_0,r_1,r_2),\ldots\bigr),
\]
\end{definition}

The universal addition and multiplication polynomials above are characterized by the property that the ghost map is a ring homomorphism, where $R^{\mathbb N}$ has componentwise addition and multiplication:
\[
\operatorname{gh}(r+s)=\operatorname{gh}(r)+\operatorname{gh}(s), \qquad \operatorname{gh}(rs)=\operatorname{gh}(r)\operatorname{gh}(s).
\]
In particular $\operatorname{gh}$ defines a natural transformation $\operatorname{gh}\colon W(-) \to (-)^\N$ of functors from commutative rings to commutative rings.

\begin{proposition} \label{prop-witt-presentation-general}
	Let $\tilde R$ be a $p$-torsionfree ring for which $R \ceq \tilde R / p \tilde R$ is reduced, and let $n \geq 1$ be an integer. Then the $(n-1)$-th ghost map induces an injective ring homomorphism
	$$\begin{tikzcd}
		w_{n-1}' : W_n(R) \arrow[r, hookrightarrow] & \tilde R / p^n \tilde R
		\end{tikzcd}$$
		whose image is 
        \begin{equation} \label{eqn-image-of-ghost}
            (\tilde R^{p^{n-1}} + p \tilde R^{p^{n-2}} + \cdots + p^{n-1} \tilde R) / p^n \tilde R,
        \end{equation}
        where we write $\tilde R^{p^i}$ for the \emph{set} of $p^i$-th powers in $\tilde R$.
\end{proposition}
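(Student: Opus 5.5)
The map is defined as follows. Given $(r_0,\dots,r_{n-1})\in W_n(R)$, choose lifts $\tilde r_i\in\tilde R$ and send it to
\[
w_{n-1}(\tilde r_0,\dots,\tilde r_{n-1})=\tilde r_0^{p^{n-1}}+p\tilde r_1^{p^{n-2}}+\cdots+p^{n-1}\tilde r_{n-1}\pmod{p^n}.
\]
The first step is to check that this does not depend on the lifts. The key is the standard lemma: if $a\equiv b \pmod{p^k}$ with $k\ge1$, then $a^p\equiv b^p\pmod{p^{k+1}}$. This follows from the binomial expansion of $(b+p^kc)^p$. Iterating, $\tilde r_i$ changed by a multiple of $p$ changes $\tilde r_i^{p^{n-1-i}}$ by a multiple of $p^{n-i}$. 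After multiplying by $p^i$, the term changes by a multiple of $p^n$, so the class mod $p^n$ is unchanged.

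Next, show that $w_{n-1}'$ is a ring homomorphism. By functoriality of $W$ and of the ghost components, the map $\tilde R\to\tilde R/p^n$ gives the following: $w_{n-1}:W_n(\tilde R)\to\tilde R$ is a ring map, since the ghost map is a ring homomorphism by the characterization of the universal polynomials, and $w_{n-1}$ only depends on the first $n$ coordinates. Composing with reduction gives a ring map $W_n(\tilde R)\to\tilde R/p^n$. It factors through the surjection $W_n(\tilde R)\to W_n(R)$ by the well-definedness just proved. Surjectivity of $W_n(\tilde R)\to W_n(R)$ is clear coordinatewise, so the induced map is a ring homomorphism.

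The image is then immediate: $w_{n-1}'(W_n(R))$ consists exactly of the classes of $\tilde r_0^{p^{n-1}}+p\tilde r_1^{p^{n-2}}+\cdots+p^{n-1}\tilde r_{n-1}$ with arbitrary $\tilde r_i\in\tilde R$, which is the set \eqref{eqn-image-of-ghost}. (It is closed under addition since it is the image of a ring map, even though the individual pieces are only sets.)

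The main obstacle is injectivity, which uses reducedness of $R$ and $p$-torsionfreeness of $\tilde R$. I would argue by induction on $n$; the case $n=1$ is the identity $R\to\tilde R/p$. Suppose $w_{n-1}(\tilde r)\in p^n\tilde R$. Reducing mod $p$ gives $r_0^{p^{n-1}}=0$ in $R$, so $r_0=0$ since $R$ is reduced. Hence we may take $\tilde r_0=p\tilde s$. Then $\tilde r_0^{p^{n-1}}=p^{p^{n-1}}\tilde s^{p^{n-1}}$, which is divisible by $p^n$ because $p^{n-1}\ge n$. The remaining sum is
\[
p\bigl(\tilde r_1^{p^{n-2}}+\cdots+p^{n-2}\tilde r_{n-1}\bigr)\in p^n\tilde R.
\]
Since $\tilde R$ is $p$-torsionfree, we can divide by $p$ and get $w_{n-2}(\tilde r_1,\dots,\tilde r_{n-1})\in p^{n-1}\tilde R$. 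By induction (the statement for $n-1$), $r_1=\cdots=r_{n-1}=0$. Together with $r_0=0$, this shows the kernel is trivial.
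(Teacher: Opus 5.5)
Your proposal is correct and follows essentially the same route as the paper: the image is read off from the ghost polynomial, and injectivity is proved by the identical induction, using reducedness to get $\tilde r_0\in p\tilde R$ and $p$-torsionfreeness to divide by $p$ and reduce to $w_{n-2}(\tilde r_1,\dots,\tilde r_{n-1})\in p^{n-1}\tilde R$. The only difference is cosmetic and lies in well-definedness. You check lift-independence directly via $a\equiv b \bmod p^k \Rightarrow a^p\equiv b^p \bmod p^{k+1}$. The paper instead observes that $w_{n-1}$ sends $W_n(p\tilde R)$, the kernel of $W_n(\tilde R)\to W_n(R)$, into $p^n\tilde R$, and then uses that the composite $W_n(\tilde R)\to\tilde R/p^n\tilde R$ is a ring map.
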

\begin{proof}
    By \autoref{ghost map def}, 
    whenever $x_0, \dots , x_{n-1} \in p \tilde R$ we get $w_{n-1}(x_0, \dots , x_{n-1}) \in p^n \tilde R$. 
	This shows that the composition $W_n(\tilde R) \xrightarrow{w_{n-1}} \tilde R \to \tilde R / p^n \tilde R$ factors through $W_n(R)$ as claimed. The claim about the image is clear from the description of $w_{n-1}$ given above.

	It remains to show that $w_{n-1}'$ is injective; i.e., that given $x_0, \dots, x_{n-1} \in \tilde R$ we have $w_{n-1}(x_0, \dots , x_{n-1}) \in p^n \tilde R$ only if $x_0, \dots , x_{n-1} \in p \tilde R$. We prove this by induction on $n$, with the base case $n = 1$ being clear.

	Observe that $w_{n-1}(x_0, \dots, x_{n-1}) \equiv x_0^{p^{n-1}} \mod p \tilde R$, and from the assumption that $\tilde R / p \tilde R$ is reduced we obtain that $x_0 \in p \tilde R$, which in turn gives 
	$$0 \equiv w_{n-1}(x_0, \dots , x_{n-1}) \equiv p w_{n-2} (x_1, \dots , x_{n-1}) \mod p^n \tilde R.$$
	Using the $p$-torsionfreeness of $\tilde R$, we conclude that $w_{n-2}(x_1, \dots , x_{n-1}) \in p^{n-1} \tilde R$, and now the claim follows from the induction hypothesis.
\end{proof}

\begin{corollary}
	The subset of $\tilde R / p^n \tilde R$ given by \cref{eqn-image-of-ghost} is a subring of $\tilde R / p^n \tilde R$.
\end{corollary}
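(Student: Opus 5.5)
The corollary follows directly from \autoref{prop-witt-presentation-general}: the image of a ring homomorphism is a subring of its target. So I will identify the subset in \cref{eqn-image-of-ghost} with the image of $w_{n-1}'$ and quote that fact. The standing hypotheses are those of the proposition: $\tilde R$ is $p$-torsionfree and $R = \tilde R/p\tilde R$ is reduced.

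First I need that $w_{n-1}'$ is a ring homomorphism. The ghost map $\operatorname{gh}$ is a natural ring homomorphism, so its $(n-1)$-th component
\[
w_{n-1}\colon W_n(\tilde R)\to \tilde R
\]
is a ring map. Composing with the projection $\tilde R\to\tilde R/p^n\tilde R$ gives a ring map, and the proposition shows it factors through $W_n(\tilde R)\to W_n(R)$. Since $W_n(\tilde R)\to W_n(R)$ is surjective (lift coordinates), the induced map $w_{n-1}'$ is a ring homomorphism. Consequently its image is closed under addition, negation and multiplication, and it contains $1 = w_{n-1}'(1,0,\dots,0)$.

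Second I check that the image equals the displayed set. By \autoref{ghost map def}, the element
\[
w_{n-1}(x_0,\dots,x_{n-1}) = x_0^{p^{n-1}} + p\,x_1^{p^{n-2}} + \cdots + p^{n-1}x_{n-1}
\]
ranges exactly over sums $a_0 + p a_1 + \cdots + p^{n-1}a_{n-1}$ with $a_i\in\tilde R^{p^{n-1-i}}$, because the $x_i$ vary independently over $\tilde R$. Reducing modulo $p^n$ gives exactly \cref{eqn-image-of-ghost}. Combined with surjectivity of $W_n(\tilde R)\to W_n(R)$, this shows that set is the image.

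There is no real obstacle; the only care needed is the surjectivity remark that identifies the image of $w_{n-1}'$ with the image of the composite from $W_n(\tilde R)$.
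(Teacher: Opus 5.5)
Your proof is correct and is the intended argument: the paper gives no separate proof, treating the corollary as immediate from \cref{prop-witt-presentation-general}, since the displayed set is the image of the ring homomorphism $w_{n-1}'$. Your surjectivity remark about $W_n(\tilde R)\to W_n(R)$ is sound, and in fact neither injectivity nor the $p$-torsionfree and reducedness hypotheses are needed: the displayed set is already the image of the ring map $W_n(\tilde R)\to \tilde R/p^n\tilde R$.
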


Next, we apply \cref{prop-witt-presentation-general} to the special case of polynomial rings. This allows us to obtain a description of their rings of Witt vectors that is amenable to computations.

\begin{theorem} \label{thm-witt-presentation-poly}
	Let $R \ceq \F_p [X_1, \dots , X_d]$ be a polynomial ring over $\F_p$ and $n \geq 1$ be an integer. Then $W_n(R)$ is isomorphic to the $(\Z / p^n)$-subalgebra of the polynomial ring $(\Z / p^n)[Y_1, \dots , Y_d]$ generated by the elements
	\begin{equation*}
	G_n \ceq \biggl\{ Y_1^{p^{n-1}}, \dots , Y_d^{p^{n-1}} \biggr\} \cup \biggl\{ p^j (Y_1^{i_1} Y_2^{i_2} \dots Y_d^{i_d})^{p^{n-1-j}} \ \bigg| \ {{0 \leq j \leq n-1} \atop { 0 \leq i_k \leq p^j - 1} }\biggr\}.
	\end{equation*}
\end{theorem}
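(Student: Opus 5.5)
We apply \cref{prop-witt-presentation-general} with $\tilde R \ceq \Z[Y_1,\dots,Y_d]$. This ring is $p$-torsionfree, and $\tilde R/p\tilde R = \F_p[Y_1,\dots,Y_d] \cong R$ is reduced. The proposition then identifies $W_n(R)$, via $w_{n-1}'$, with the subset
\[
A \ceq (\tilde R^{p^{n-1}} + p\tilde R^{p^{n-2}} + \cdots + p^{n-1}\tilde R)/p^n\tilde R \subseteq (\Z/p^n)[Y_1,\dots,Y_d],
\]
which is a subring by the corollary. Every element of $G_n$ visibly lies in $A$: the elements $Y_k^{p^{n-1}}$ and $p^j(Y^{i})^{p^{n-1-j}}$ are of the form $p^j f^{p^{n-1-j}}$. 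Since $A$ is a subring containing the image of $\Z$, the $\Z/p^n$-subalgebra generated by $G_n$ is contained in $A$. It remains to prove the reverse inclusion.

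It suffices to show that $p^j f^{p^{n-1-j}}$ lies in the subalgebra $\Z/p^n[G_n]$ for every $f \in \tilde R$ and every $0\le j\le n-1$. We argue by descending induction on $j$, since $p^{n-1}$ times anything becomes easier to handle. The strategy is to reduce first to monomials and then to the generators.

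\textbf{Additivity step.} For $f = g + h$, expand
\[
(g+h)^{p^{m}} = g^{p^m} + h^{p^m} + \sum_{0<k<p^m} \binom{p^m}{k} g^k h^{p^m-k}, \qquad m = n-1-j .
\]
The mixed terms are the difficulty, because the exponents $k$ are not $p$-powers. The cleanest way around this is to use the fact that $A$ is a ring, which is already proven. Then $p^j(g+h)^{p^m} - p^j g^{p^m} - p^j h^{p^m}$ lies in $A \cap p^{j+1}(\ldots)$. More precisely, since the ghost map is additive, this difference equals $w_{n-1}'$ of a Witt vector whose first $j+1$ components vanish. It is therefore a sum of terms $p^{j'} u^{p^{n-1-j'}}$ with $j'>j$, which are handled by the induction hypothesis.

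\textbf{Reduction to monomials.} By the additivity step, it is enough to treat $f = cY^{a}$ with $c \in \Z$ a single monomial. Integer coefficients are absorbed in the same way, since $c^{p^m}$ is an integer. Then write each exponent as $a_k = p^j q_k + i_k$ with $0 \le i_k < p^j$. This gives
\[
p^j (Y^{a})^{p^{m}} = \prod_k \bigl(Y_k^{p^{n-1}}\bigr)^{q_k} \cdot p^j (Y^{i})^{p^{n-1-j}},
\]
which is a product of elements of $G_n$. This completes the reverse inclusion.

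The main obstacle is the additivity step, namely controlling the cross terms so that the induction closes. Using Witt-vector additivity through the ghost map, together with the injectivity of $w_{n-1}'$ from \cref{prop-witt-presentation-general}, avoids any direct binomial valuation estimates.
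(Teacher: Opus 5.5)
Your proof is correct, but it takes a different route from the paper's. The paper inducts on $n$. It first shows that $p^{n-1}\tilde R_n$ is an ideal contained in both $S_n$ and $W_n(R)$; the monomial factorization is used only here, at level $j=n-1$. It then passes to the quotient and uses the Frobenius lift $\phi(Y_i)=Y_i^p$, together with two facts: functoriality of ghost maps, and that $w_{n-1}'$ reduced mod $p^{n-1}$ equals $w_{n-2}'\circ\mathrm{tr}\circ F$. This identifies both quotients with $\phi(S_{n-1})=\phi(W_{n-1}(R))$. You instead fix $n$ and induct downward on the $p$-adic level $j$. Writing $[g]=(g,0,0,\dots)$, you use $p^j g^{p^{n-1-j}}=w_{n-1}(V^j[g])$ and $V^j[g+h]-V^j[g]-V^j[h]\in V^{j+1}W(\tilde R)$ to push the binomial cross terms to higher levels. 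You then apply the monomial factorization at every level $j$, not just the top one. Your route needs no Frobenius lift and no comparison with $W_{n-1}$. It is also constructive: it gives a recursive procedure expressing any $p^j f^{p^{n-1-j}}$ as a polynomial in $G_n$. The paper's route is more structural, since it explains why $G_n$ is exactly $\phi(G_{n-1})$ together with the new $p^{n-1}$-level generators. One point to tighten: neither the ring structure of $A$ nor the injectivity of $w_{n-1}'$ drives your additivity step. What you actually use, and should state explicitly, is that $V$ is additive and that $[g+h]-[g]-[h]$ has zeroth Witt component $(g+h)-g-h=0$. That is why the first $j+1$ components of $V^j[g+h]-V^j[g]-V^j[h]$ vanish.
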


Note that this list of generators has some redundancies; for example, if $d \geq 2$ and $n=3$ one could always write $p^2 Y_1^p Y_2^p = p ( p Y_1^p Y_2^p)$. 

Another way to think of this set of monomials is as follows. Set $q = p^{n-1}$, consider the polynomial ring
$$\tilde R \ceq \Z [X_1^{1/q}, \dots , X_d^{1/q}]$$
in the formal variables $X_i^{1/q}$. Then, after identifying $Y_i = X_i^{1/q}$, the set $G_n$ gives generators for the subring of $\tilde R / p^n \tilde R$ that consists of  images of polynomials $P(Y_i) = P(X_i^{1/q})$ satisfying 
\[
X_i\frac{\partial P}{\partial X_i} \in \tilde R;
\]
see \cite[\S~3.3]{ChambertLoirSurvey}.

\begin{proof}
Consider the ring $\tilde R \ceq \Z[Y_1, \dots, Y_d]$ so that, by identifying $X_i \in R$ with the class of $Y_i \in \tilde R$, we have $R = \tilde R / p \tilde R$. For every integer $n \geq 1$, let $\tilde R_n \ceq \tilde R / p^n \tilde R$, and let $S_n$ denote the $(\Z / p^n)$-subalgebra of $\tilde{R}_n$ generated by $G_n$. By \cref{prop-witt-presentation-general} we may identify $W_n(R)$ with its image in $\tilde R_n$; that is, with the subring of $\tilde R_n$ given in \cref{eqn-image-of-ghost}. We have ring inclusions $S_n \subseteq W_n(R) \subseteq \tilde R_n$, and our goal is to show that the first one is an equality. We do this by induction on $n$, with the base case $n = 1$ being clear. 

For $n \geq 2$, we claim that both $S_n$ and $W_n(R)$ contain $p^{n-1} \tilde R_n$ as an ideal, the latter being clear from \cref{eqn-image-of-ghost}. As for $S_n$, it suffices to show that $S_n$ contains every element of the form $p^{n-1} Y_1^{a_1} \cdots Y_d^{a_d}$, and by writing $a_i = p^{n-1} b_i + c_i$ for integers $b_i, c_i$ with $0 \leq c_i < p^{n-1}$, we have
\[
p^{n-1} Y_1^{a_1} \cdots Y_d^{a_d} = (Y_1^{p^{n-1}})^{b_1} \cdots (Y_d^{p^{n-1}})^{b_d} \cdot (p^{n-1} Y_1^{c_1} \cdots Y_d^{c_d})
\]
we can write this monomial as a product of elements of $G_n$, thus proving the claim.

By killing the ideal $p^{n-1} \tilde R_n$, we obtain inclusions $S_n / p^{n-1} \tilde R_n \subseteq W_n(R) / p^{n-1} \tilde R_n \subseteq \tilde R_{n-1}$, and it suffices to show that the first one is an equality. To prove this, let $\phi\colon \tilde R_{n-1}  \to \tilde R_{n-1}$ denote the lift of Frobenius given by $\phi(Y_i) = Y_i^p$. Note that in $\tilde R_{n-1}$ we have $p^{n-1} = 0$, so we get that $\phi(G_{n-1})$ and $G_{n}$ generate the same algebra after reduction modulo $p^{n-1}$ and $S_n / p^{n-1} \tilde R_n$ identifies with $\phi(S_{n-1})$. 

Next, note that we have two commutative diagrams
\[
\begin{tikzcd}[column sep = large]
	W_{n-1} (R) \arrow[r, "w_{n-2}'"] \arrow[d, "F"'] & \tilde R_{n-1} \arrow[d, "\phi"] & \null & W_n (R) \arrow[r, "w_{n-1}'"] \arrow[d, "\mathrm{tr}\circ F"'] & \tilde R_n  \arrow[d] \\
    W_{n-1} (R) \arrow[r, "w_{n-2}'"] & \tilde R_{n-1} & \null &  W_{n-1} (R) \arrow[r, "w_{n-2}'"']& \tilde R_{n-1},
\end{tikzcd}
\]
in which $F$ stands for the Frobenius morphism and $\mathrm{tr}$ truncation.
Indeed, the commutativity of the left diagram follows from functoriality of ghost maps (applied to the map $\phi: \tilde R \to \tilde R$) together with the fact that $\phi$ reduces to Frobenius on $R$, and the commutativity of the right diagram is checked by inspection.

From these two diagrams, we deduce that $W_n(R) / p^{n-1} \tilde R_n$ identifies with $\phi(W_{n-1} (R))$. Applying the induction hypothesis, we get 
\[
{S_n}/{p^{n-1}\widetilde R_n}
=
\phi(S_{n-1})
=
\phi\bigl(W_{n-1}(R)\bigr)
=
{W_n(R)}/{p^{n-1}\widetilde R_n}
\]
therefore completing the proof of the theorem.
\end{proof}

For $R \ceq \F_p[X_1, \dots , X_d]$ a polynomial ring over $\F_p$, \cref{thm-witt-presentation-poly} allows us to think of $W_n(R)$ as an explicit subring of a polynomial ring $\tilde R_n \ceq (\Z / p^n)[Y_1, \dots , Y_d]$ over $\Z / p^n$, for which polynomial arithmetic is already implemented in existing computer algebra software.
(Though we do note that many capabilities are not yet implemented over $\Z/p^n$-algebras in Macaulay2, such as kernel and primary decomposition.)
Our package exploits this fact to provide algorithms for arithmetic operations of finite length Witt vectors in polynomial rings over $\F_p$. In order to do this, we first need an algorithm that finds preimages of polynomials under the isomorphism given in \cref{thm-witt-presentation-poly}. To describe this algorithm, recall that $R \ceq \F_p[X_1, \dots , X_d]$ is a polynomial ring over $\F_p$. Given an integer $n \geq 1$, we set $\tilde R_n \ceq (\Z / p^n)[Y_1, \dots , Y_d]$, and we have an inclusion $w_{n-1}' \colon W_n(R) \hookrightarrow \tilde R_n$. For $n = 1$, we identify $\tilde R_1$ with $R$ via the unique isomorphism that exchanges the variable $Y_i$ with the variable $X_i$ for every $1 \leq i \leq d$.

\begin{algorithm}[H] 
\caption{Given an element $P \in \tilde R_n \ceq (\Z / p^n)[Y_1, \dots , Y_d]$, determine whether it belongs to the image of $W_n(R)$ under $w_{n-1}'$. If it does, find its preimage.} \label{alg-1}
\begin{algorithmic}[1]
\State initiate an empty list $W$
\State set $Q \ceq P$
\State set $m \ceq n$
\While{$m \geq 1$}
\State set $q \ceq p^{m-1}$
\State set $Q_1$ in $R = \tilde R_1$ to be the mod-$p$ reduction of $Q$
\If{$Q_1$ is not a $q$-th power in $R$}
\State \textbf{break} ``$P$ is not in the image of $W_n(R)$"
\Else
\State set $f \in R$ to be the $q$-th root of $Q_1$
\State append $f$ to the right of $W$
\State choose a preimage $F$ of $f$ under $\tilde R_m\twoheadrightarrow R $
\State set $Q' \in \tilde R_{m}$ to be an element for which $Q - F^q = p Q'$
\State replace $Q $ by  the image of $Q'$ in $\tilde R_{m-1}$
\State set $m \ceq m-1$
\EndIf
\EndWhile
\State we conclude that $P$ is in the image of $W_{n}(R)$, with preimage $W \in W_n(R)$.
\end{algorithmic}
\end{algorithm}

Note that \cref{alg-1} relies on being able to compute $q$-th roots of elements of $R = \F_p[X_1, \dots , X_d]$, for $q$ a power of $p$. This is easy: given a polynomial $f = \sum_\alpha a_\alpha X_1^{\alpha_1} \cdots X_d^{\alpha_d}$ with all $a_\alpha \neq 0$, we know that $f$ is a $q$-th power precisely when all the $\alpha_i$ are divisible by $q$ and, when this is the case, we have $f^{1/q} = \sum_\alpha a_\alpha X_1^{\alpha_1 / q} \cdots X_d^{\alpha_d / q}$.

With the above algorithm in hand, our strategy for arithmetic operations is as follows. Note that by an arithmetic operation on a ring $R$ we mean a function $\mathsf P \colon R \times \cdots \times R \to R$ that commutes with all ring homomorphisms $R \to S$. For example, given $t_1, t_2 \in W_n(R)$, we may want to compute $\mathsf P(t_1, t_2) = t_1 + t_2$ or $\mathsf P(t_1, t_2) = t_1 \cdot t_2$.

\begin{algorithm}[H]
\caption{For $R \ceq \F_p[X_1, \dots, X_d]$, suppose $(t_i) \subseteq W_n(R) = R \times \cdots \times R$ is a finite collection of Witt vectors of length $n$ over $R$, given as tuples of $n$ elements of $R$. Let $\mathsf P$ be some arithmetic operation that one can apply to the collection $(t_i)$, and which is implemented for polynomials in $\tilde R_n$. To explicitly compute $\mathsf P(t_i) \in W_n(R) = R \times \cdots \times R$ as a tuple, we do the following.} \label{alg-2}
\begin{algorithmic}[1]
\State For every $i$, choose a preimage $\tilde t_i$ under the quotient map $W_n(\tilde R_n) \twoheadrightarrow W_n(R)$
\State For every $i$, compute $w_{n-1} (\tilde t_i) \in \tilde R_n$ by using (\ref{ghost map def}); this gives the image of $t_i$ in $\tilde R_n$ under the isomorphism given by \cref{thm-witt-presentation-poly}. 
\State Compute $\mathsf P (\tilde t_i)$ in $\tilde R_n$
\State Find the preimage of $\mathsf P (\tilde t_i)$ in $W_n(R)$ under the isomorphism from \cref{thm-witt-presentation-poly}
\end{algorithmic}
\end{algorithm}

\begin{remark}
\cref{alg-1} and \cref{alg-2} allow for computing arithmetic operations in $W_n(R)$ where $R$ is an arbitrary finite type algebra over an arbitrary finite field. Indeed, any such algebra is of finite type over $\F_p$, and can be written as a quotient of $\F_p[X_1, \dots , X_d]$. After finding lifts, all arithmetic operations can be performed in this ambient ring. This functionality is implemented in our \texttt{Macaulay2} package.
\end{remark}

\section{Details on the \texttt{Macaulay2} package}
Let $R$ be a finitely generated $\F_p$-algebra; the package expects that \texttt{R} is a quotient of a polynomial ring \texttt{S} and that \texttt{coefficientRing S} is either \texttt{ZZ/p} or \texttt{GF p\^{}e} for some $e$, and will return an error if this is not the case.

\subsection{Witt rings and Witt ring elements}

From the ring $R$, the user can create the corresponding ring of length-$n$ Witt vectors via \texttt{witt(n, R)}; the result is either a \texttt{WittPolynomialRing} or a \texttt{WittQuotientRing}, depending on whether $R$ is a polynomial ring over $\F_p$ or a quotient of such a ring. 

\begin{remark}
    If $R=\F_{p^m}[x_1,\dots,x_n]$ for $m>1$, then the resulting calculations are carried out in the background by treating $R$ as $\F_p[a,x_1,\dots,x_n]/f(a)$, with $f$ an irreducible polynomial of degree $m$; however, the data type of \texttt{witt(n,R)} is still a \texttt{WittPolynomialRing} to avoid mathematical confusion. 
\end{remark}

Elements of \texttt{witt(n, R)} (i.e., elements of type \texttt{WittRingElement}) are created via \texttt{witt\{r1,..,rn\}}, where \texttt{ri} are elements of $R$. 
The underlying tuple can be recovered from a \texttt{WittRingElement} via \texttt{toList}, and individual entries can be accessed via \texttt{\#n}.
The ambient Witt ring can be recovered from a \texttt{WittRingElement} by \texttt{ring}, and the underlying ring $R$ from \texttt{witt(n,R)} via \texttt{unWitt}. 

\begin{example}[Creation and description of Witt ring elements]
\
    \begin{verbatim}
    i1 : R = (GF 25)[x,y]/(x^2-y^3);
    i2 : w = witt{2*x,y}
    o2 = {2x, y}
    o2 : WittRingElement
    i3 : toList w
    o3 = {2x, y}
    o3 : List
    i4 : w#1
    o4 = y
    o4 : R
    i5 : W = ring w
    o5 = Witt (R)
             2
    o5 : WittQuotientRing
    i6 : unWitt W
    o6 = R
    o6 : QuotientRing
    \end{verbatim}
\end{example}

\subsection{Arithmetic of Witt vectors}
Basic ring operations like \texttt{+}, \texttt{-}, \texttt{*} are defined on pairs of elements of type \texttt{WittRingElement} that live in the same Witt ring (which is to say, which are defined as \texttt{witt} of lists of elements of $R$ of the same length). 

\begin{example}[Addition and multiplication of Witt vectors]
\
    \begin{verbatim}
    i1 : S = (ZZ/5)[x_1,x_2,y_1,y_2];
    i2 : W2S = witt(2,S)
    o2 = Witt (S)
             2
    o2 : WittPolynomialRing
    i3 : w1 = witt{x_1,x_2}
    o3 = {x , x }
           1   2
    o3 : WittRingElement
    i4 : w2 = witt{y_1,y_2};
    i5 : w1+w2
                      4       3 2     2 3      4
    o5 = {x  + y , - x y  - 2x y  - 2x y  - x y  + x  + y }
           1    1     1 1     1 1     1 1    1 1    2    2
    o5 : WittRingElement
    i6 : w1*w2
                 5    5
    o6 = {x y , x y  + x y }
           1 1   2 1    1 2
    o6 : WittRingElement
    i7 : -2*w1
                     5
    o7 = {-2x , x  - 2x }
             1   1     2
    o7 : WittRingElement
    \end{verbatim}
\end{example}

\subsection{Witt ring maps and functoriality}
A map $f: R\to S$ induces maps $W_n(f) : W_n(R)\to W_n(S)$ for all $n$ (in the tuple representative, these maps are just applying $f$ componentwise).  This map is obtained from a \texttt{RingMap}, say \texttt{f}, by \texttt{witt(n, f)}. The methods \texttt{source, target} will return the source and target, and the underlying ring map $f$ can be obtained as \texttt{baseMap}.

\begin{example}[Witt ring maps]
\
\begin{verbatim}
    i1 : R = (ZZ/5)[x,y];
    i2 : S = (ZZ/5)[a,b,c,d];
    i3 : f = map(S, R, {a*b, c*d});
    o3 : RingMap S <-- R
    i4 : Wf = witt(2, f)
    o4 = WittRingMap Witt (S) <-- Witt (R)
                          2            2
    o4 : WittRingMap
    i5 : (source Wf, target Wf)
    o5 = (Witt (R), Witt (S))
              2         2
    o5 : Sequence
    i6 : baseMap Wf === f
    o6 = true
\end{verbatim}
\end{example}

\subsection{Witt ring operations}
The usual Witt ring operations of Frobenius and Verschiebung are implemented:
The Verschiebung (or shift) operator acts on elements of a Witt ring; \texttt{verschiebung w} acts by shifting \texttt{w} once and 
\texttt{verschiebung(n, w)} shifts it $n$ times. 
The $e$-th Witt Frobenius of a Witt ring \texttt{W = witt(n,R) } can be constructed as a \texttt{WittRingMap} by either \texttt{wittFrobenius(e, W)} or directly from $R$ by \texttt{wittFrobenius(e, n, R)}; if $e$ is not given as an argument, it is taken to be 1.
 The method \texttt{wittFrobenius} can act on objects of class \texttt{WittRingElement}. 
Finally, the truncation maps $W_m(R) \to W_{n}(R)$ for $n\leq m$ are implemented both on elements and as \texttt{WittRingMap}s via \texttt{truncate(n, WittRingElement)}, \texttt{truncate(n, WittPolynomialRing)} and \texttt{truncate(n, WittQuotientRing)}. 
 
\begin{example}[Frobenius, Verschiebung, and truncations in Witt vectors]
\ 
    \begin{verbatim}
    i1 : S = (ZZ/5)[x_1,x_2,x_3,y_1,y_2,y_3]
    o1 = S
    o1 : PolynomialRing
    i2 : W3S = witt(3,S)
    o2 = Witt (S)
             3
    o2 : WittPolynomialRing
    i3 : w = witt{x_1,x_2,x_3}
    o3 = {x , x , x }
           1   2   3
    o3 : WittRingElement
    i4 : wittFrobenius(w)
            5   5   5
    o4 = {x , x , x }
           1   2   3
    o4 : WittRingElement
    i5 : F = wittFrobenius(1, W3S)
    o5 = WittRingMap Witt (S) <-- Witt (S)
                         3            3
    o5 : WittRingMap
    i6 : wittFrobenius(w) == F(w)
    o6 = true
    i7 : Vw = verschiebung(w)
    o7 = {0, x , x , x }
              1   2   3
    o7 : WittRingElement
    i8 : truncate(3, Vw)
    o8 = {0, x , x }
              1   2
    o8 : WittRingElement
    i9 : truncate(3, ring Vw)
    o9 = WittRingMap Witt (S) <-- Witt (S)
                         3            4
    o9 : WittRingMap
    \end{verbatim}
\end{example}

\subsection{Explicit calculation of $W_n(R)$}
We also implement a calculation of $W_n(R)$ as an explicit $\Z/p^n$-algebra, conversion of tuples in $W_n(R)$ to elements of this explicit $\Z/p^n$-algebra, and representations of Witt ring maps as maps on these algebras.
To obtain a description of \texttt{W = witt(n, R)} as a $\Z/p^n$-algebra, one writes \texttt{explicit W}. (Caution: these rings have many generators and relations even for small $n$ and $R$ with few generators!)
Likewise, if \texttt{Wf} is a \texttt{WittRingMap} then \texttt{explicit Wf} returns the map from \texttt{explicit source Wf} to \texttt{explicit target Wf}. 
Finally, to obtain the element of \texttt{explicit witt(n,R)} corresponding to a \texttt{WittRingElement} $w$, use \texttt{wittTupleToRing w}. 

\begin{example}[explicit representatives of Witt rings]
\ 
    \begin{verbatim}
    i1 : R = (ZZ/2)[x]
    i2 : E = explicit witt(2, R)
    
          ZZ[T        , T        ]
              {0, {1}}   {1, {1}}
    o2 = --------------------------
                          2
         (4, 2T        , T        )
               {1, {1}}   {1, {1}}
    i3 : explicit wittFrobenius witt(2, R)
                      2
    o3 = map (E, E, {T        , 2T        })
                      {0, {1}}    {0, {1}}
    o3 : RingMap E <-- E
    i4 : wittTupleToRing witt{x, 0}
    o4 = T
          {0, {1}}
    o4 : E
    \end{verbatim}
\end{example}

\subsection{Ideals in Witt rings}\label{section witt ideals}
Ideals in Witt rings are also implemented; the user can specify such an ideal by giving a list of generators to the method \texttt{wittIdeal}, and the generators of a Witt ideal can be recovered via \texttt{generators} exactly as for usual ideals.
Standard operations such as ideal products,  addition, and comparison are implemented.
Moreover, a minimal generating set for a Witt ring ideal can be found via \texttt{trim}.
Given a Witt ring ideal, the user can find explicit generators for the ideal in the explicit presentation of the Witt ring via \texttt{explicit}.

\begin{example}[Witt ring ideals]\
    \begin{verbatim}
    i1 : S = (ZZ/2)[x,y];
    i2 : WI = wittIdeal(witt{x, x}, witt{y, y}, witt{x + y, x*y + x + y})
    o2 = ideal ({x, x}, {y, y}, {x + y, x*y + x + y})
    o2 : WittIdeal
    i3 : WI' = trim WI
    o3 = ideal ({y, y}, {x, x})
    o3 : WittIdeal
    i4 : WI == WI'
    o4 = true
    i5 : WI^2
                  2             2       2     2
    o5 = ideal ({y , 0}, {x*y, x y + x*y }, {x , 0})
    o5 : WittIdeal
    i6 : explicit WI'

    o6 : ideal (T            + T           , T            + T           )
                 {0, {0, 1}}    {1, {0, 1}}   {0, {1, 0}}    {1, {1, 0}}
    \end{verbatim}
\end{example}

\section{Finding Frobenius lifts}\label{section frobeniuslift}
In this section, we explain how we can compute the set of possible lifts of Frobenius on an $\F_p$-algebra $A$ to a  specific lift $B$ to a (flat) $W_2(\F_p)$-algebra. 

\begin{remark}
In a similar direction,
    Zdanowicz discovered a criterion to check whether a \emph{complete intersection ring} admits a flat lifting to $W_2(k)$ along with a compatible lifting of the Frobenius (and wrote Macaulay2 code to find the explicit flat lifting and lift of Frobenius) \cite[Proposition~7.2.4]{ZdanowiczThesis}.
\end{remark}

Let $A$ be a ring of characteristic $p>0$ that is a quotient of a polynomial ring over $\F_p$, say \[
A=\F_p[x_1,\ldots,x_n]/(g_1,\ldots,g_r).\] 
The ``default'' lift used is $B\coloneqq\Z/p^2[x_1,\ldots,x_n]/(\tilde{g}_1,\ldots,\tilde{g}_r)$ where $\tilde{g}_i$ is the polynomial in $\Z/p^2[x_1,\ldots,x_n]$ with coefficients in $0,\ldots,p-1$ such that $\tilde{g}_i\mod p=g_i$.
We set $I:=(g_1, \ldots, g_r)$ and $\tilde{I} := (\tilde{g}_1, \ldots, \tilde{g}_r)$.
Our methods allow the user to specify a different choice of lift via \texttt{PerturbationTerm} (see  \cref{example perturbation term}); at the moment, there is not functionality for lifts that are not given by lifting generators of the defining ideal.



A lift of the Frobenius to a ring map $\phi:B\to B$ must by definition for each $b \in B$ satisfy 
$$
\phi(b) = b^p + p b'
$$
for some $b'\in B$; since $B$ is generated by $x_1,\dots,x_n$, the lift of Frobenius is determined uniquely by $\phi(x_1),\dots,\phi(x_n)$, and thus equivalently by the values $x_1',\dots,x_n' \in B$. 
Note also that the ring map $\phi$ does not change if one replaces all $b'$ by $b'+pb''$, since 
$$
p(b'+pb'') = pb'+p^2b'' = pb'. 
$$
We may thus view the elements $x_i'$ as living in $B/pB = A$ rather than $B$. 

If $B$ is a polynomial ring, say $B=\Z/p^2[x_1,\ldots, x_n]$ then any choice of $x_i' \in A$ gives a well-defined lift of Frobenius $\phi:B\to B$.
If $B$ is instead a quotient of a polynomial ring, say $B=\Z/p^2[x_1,\ldots,x_n]/(\tilde g_1,\ldots, \tilde g_r)$, which is flat over $\Z/p^2\Z$, then in order to be well-defined, the map $\phi$ must send
$(\tilde g_1,\dots,\tilde g_r) $ to itself; we now find conditions such that this holds.

Since $\tilde g_i = g_i +p h_i$, we can write
$$
\phi (\tilde g_i) = \phi(g_i) + p\phi(h_i) = \phi(g_i) +ph_i^p.
$$
Taylor expanding and writing $\mathbf{x}=(x_1,\ldots,x_n)$ and $\mathbf{x}'=(x_1',\ldots,x_n')$ gives
$$
\phi( g_i ) =  g_i(\phi(\mathbf{x})) =
 g_i(\mathbf{x}^p +p \mathbf{x'}) =  g_i(\mathbf{x}^p) + p \sum_j \frac{\partial  g_i}{\partial x_j}(\mathbf{x}^p)\cdot x_j'.
$$
Now, by setting $\delta(g_j) := (g_j(\mathbf x^p) - g_j(\mathbf x)^p)/p$, we can write this as 
$$
g_i^p + p\biggl( \delta(g_i) + \sum_j \frac{\partial g_i}{\partial x_j}(\mathbf x^p)\cdot x_j' \biggr).
$$
To summarize, we have that 
$$
\phi (\tilde g_i) = g_i^p + p\biggl( \delta(g_i) + h_i^p+  \sum_j \frac{\partial g_i}{\partial x_j}(\mathbf x^p)\cdot x_j'\biggr).
$$
Since $g_i^p  = \tilde g_i^p \mod p^2$, we have that $g_i^p \in \tilde I$ automatically, and thus $\phi$ preserves $\tilde I$ if and only if 
$$
p \biggl( \delta(g_i) + h_i^p+  \sum_j \frac{\partial g_i}{\partial x_j}(\mathbf{x}^p)\cdot x_j'\biggr) \in \tilde I 
$$
for all $i$.
By flatness over $\Z/p^2\Z$, this holds if and only if 
\begin{equation}\label{eqn delta is 0}
\delta(g_i) + h_i^p+  \sum_j \frac{\partial g_i}{\partial x_j}(\mathbf{x}^p)\cdot x_j' \in I;
\end{equation}
this is now an equation entirely in $A$; taking the set of these equations for all $i$  yields the constraints on the $x_j'$ that need to be satisfied for $\phi$ to give a valid lift of the Frobenius on $B$.
(Note also that 
$\delta(g_i) + h_i^p = \delta(g_i+ph_i)$.)



The \texttt{findFrobeniusLiftConstraints} method finds the constraints imposed by \autoref{eqn delta is 0} on the $x_i'$. To  avoid confusion between variables $x_i$ in the ring whose Frobenius lift we want to find and the corresponding variables $x_i'$, we have chosen to denote $x_i'$ by $aa_i$ (where $i$ starts at 0).
The output is thus an equation in the $aa_i$ and original variables $x_i$.

One can produce random Frobenius lifts by using the \texttt{findFrobeniusLift} method; since the $x_i' $ can be of arbitrarily high degree, the user must specify a maximal degree $d$ of the terms appearing in the $x_i'$. 

\begin{example} We consider the case of the cusp for $p=2$.
    \begin{verbatim}
    i1 : S = (ZZ/2)[x,y];
    i2 : f = x^2-y^3;
    i3 : findFrobeniusLiftConstraints(S/f)
                    2     4
    o3 = ideal (aa x y + x )
                  1
                  
                  ZZ
                  --[aa , aa , x, y]
                   2   0    1
    o3 : Ideal of ------------------
                          3    2
                         y  + x
    \end{verbatim}
The equation in $\texttt{o3}$ tells us that the lift $aa_0=x'$ can be anything, but $aa_1=y'$ must satisfy $y'x^2y+x^4 \in (x^2+y^3)$. Replacing $x^2$ by $y^3$ gives $y'y^4+y^6 \in (x^2+y^3)$ so $y'\in y^2 +(x^2+y^3)$. 

The user can then attempt to find a random lift by using \texttt{findFrobeniusLift(d,f)}, where $d$ specifies the highest degree term allowed:
\begin{verbatim}
    i4 : findFrobeniusLift(2, S/f)
                    2	
    o4 = {x*y + y, y }
    o4 : List
    i5 : findFrobeniusLift(3, S/f)
           3    2       2    3    2                 3    2    2
    o5 = {x  + x y + x*y  + y  + x  + x*y + x + y, y  + x  + y }
 \end{verbatim}
The first line tells us that $x' = xy+y$, $y'=y^2$ define a valid lift of Frobenius on $\Z/4\Z[x,y]/(x^2+y^3)$.
Because we saw above that the only constraint is $y' \in y^2+(x^2+y^3)$, raising the degree from 2 to 3 
can change $y'$ only by an element of $(f)$
as one can see when trying \texttt{findFrobeniusLift(3,f)}.
\end{example}

\begin{example}
    Some rings (e.g., monomial rings) will have lifts of Frobenius given by simply setting $x_i'=0$ (i.e., using the ``naive'' lift $x_i\mapsto x_i^p$). If the user wants to look for other lifts, they can set the \texttt{Nontrivial} flag to true:
\begin{verbatim}
    i1 : S = (ZZ/2)[x,y]
    o1 = S
    o1 : PolynomialRing
    i2 : I = ideal(x*y)
    o2 = ideal(x*y)
    o2 : Ideal of S
    i3 : findFrobeniusLift(2, I)
    o3 = {0, 0}
    i4 : findFrobeniusLift(2, I, Nontrivial => true)
    o4 = {x, y}
\end{verbatim}
Thus, one finds that both $x\mapsto x^2,y\mapsto y^2$ and  $x\mapsto x^2 + px$ and $y\mapsto y^2+py$ are lift of Frobenius $S/I$.
\end{example}

The method \texttt{findFrobeniusLiftConstraints} gives the equations which must be satisfied by the choice of $aa_i$. If one wants to parametrize the $aa_i$ by their coefficients (up to a given degree) and translate the equations of the $aa_i$ into an equation of their coefficients,
one can use the method \texttt{parametrizedLifts}. 
If $R = S/(I)$ for a polynomial ring $S=k[x_1,\ldots,x_n]$ and one represents the choice of a Frobenius lift on $W_2(k)[x_1,\ldots,x_n]$ by the choice of $x_i'$ of degree $\leq d$, this method returns the equations that the coefficients of those polynomials have to satisfy for the resulting Frobenius lift to descend to $W_2(k)[x_1,\ldots,x_n]/(I)$. 
The output is an ideal $J$ in variables $c_{{a_1..a_n},j}$, where $c_{{a_1..a_n},j}$ is the coefficient of $x_1^{a_1}..x_n^{a_n}$ 
in the correction polynomial $x_j'$
; the exponents $a_i$ satisfy $\sum a_i \leq d$.

\begin{example}\label{example create equations no perturbation}\ {\small
\begin{verbatim}
    i1 : S = (ZZ/2)[x,y];
    i2 : I = ideal(x+y);
    i3 : J = parametrizedLifts(2, I);
    i4 : netList  J_*
         +-------------------------------------------------------------------------------------+
    o4 = |c           + c           + c           + c           + c           + c           + 1|
         | {0, 2},{1}    {0, 2},{2}    {1, 1},{1}    {1, 1},{2}    {2, 0},{1}    {2, 0},{2}    |
         +-------------------------------------------------------------------------------------+
         |c           + c           + c           + c                                          |
         | {0, 1},{1}    {0, 1},{2}    {1, 0},{1}    {1, 0},{2}                                |
         +-------------------------------------------------------------------------------------+
         |c           + c                                                                      |
         | {0, 0},{1}    {0, 0},{2}                                                            |
         +-------------------------------------------------------------------------------------+
\end{verbatim}}
This says that the coefficients of the constant and linear terms of $x'$ and $y'$ must sum to $0$ in each degree, and the coefficients of the degree-2 terms must sum to $1$.
\end{example}

To change the choice of Frobenius lift, one can change the \texttt{PerturbationTerm}. Say \[
A=\F_p[x_1,\ldots,x_n]/(g_1,\ldots,g_r).\] The default lift (e.g., the one used above) is $B\coloneqq\Z/p^2[x_1,\ldots,x_n]/(\tilde{g}_1,\ldots,\tilde{g}_r)$ where $\tilde{g}_i$ is the polynomial in $\Z/p^2[x_1,\ldots,x_n]$ with coefficients in $0,\ldots,p-1$ such that $\tilde{g}_i\mod p=g_i$. The \texttt{PerturbationTerm} option allows for other choices of lift:
setting \texttt{PerturbationTerm => $\{\alpha_1,\alpha_2,\ldots,\alpha_r\}$} specifies the lifted ring 
$\Z/p^2[x_1,\ldots,x_n]/(\tilde{g}_1+p\alpha_1, \tilde{g}_2+p\alpha_2,\ldots, \tilde{g}_r+p\alpha_r)$.
By default \texttt{PerturbationTerm} is null (i.e., corresponding to $\alpha_i=0$); note that the value of \texttt{PerturbationTerm} can affect the existence of a Frobenius lift.

One can also choose the polynomials that give the image of $x_j$ under the Frobenius lift to be homogeneous by setting the \texttt{Homogeneous} option to true. It is false by default. 
This is useful when looking for lifts of Frobenius on coordinate rings of projective varieties, and  can also be quite useful since it reduces the search space and therefore allows \texttt{findFrobeniusLift} to find an answer faster.

\begin{example}\label{example perturbation term}
    Here is a continuation of \autoref{example create equations no perturbation} with a different perturbation term and with the \texttt{Homogeneous} option set to true.
   { \small
\begin{verbatim}
    i4 : J = parametrizedLifts(2, I, Homogeneous=>true, PerturbationTerm=>{y})  
    o4 = ideal(c           + c           + c           + c           + c           + c          )
                {0, 2},{1}    {0, 2},{2}    {1, 1},{1}    {1, 1},{2}    {2, 0},{1}    {2, 0},{2}

                  ZZ
    o4 : Ideal of --[c          , c          , c          , c          , c          , c          ]
                   2  {0, 2},{1}   {0, 2},{2}   {1, 1},{1}   {1, 1},{2}   {2, 0},{1}   {2, 0},{2}
\end{verbatim}}
This means that the coefficients of the degree-2 terms in \(x'\) and \(y'\) must sum to \(0\).
\end{example}

The choice of \texttt{PerturbationTerm} can affect the existence of a Frobenius lift:

\begin{example}
One source of such examples is in the theory of elliptic curves: if $X$ is an ordinary elliptic curve over $\mathbb F_p$, the theory of Serre--Tate (see, e.g., \cite{LubinSerreTate1964}) lifts says there is a unique  lift of $X$ (up to isomorphism) to a $W_2(k)$-algebra admitting a lift of Frobenius. In particular, different choices of perturbation term will admit or not admit lifts of Frobenius:
\begin{verbatim}
    i1 : S = (ZZ/2)[x,y,z];
    i2 : I =  ideal(x^3+x^2*z+x*y*z+x*z^2+y^2*z+y*z^2);
    i3 : L = findFrobeniusLift(2, I,  Homogeneous => true, PerturbationTerm => {z^3})
           2          2          2
    o3 = {x  + x*y + y  + y*z + z , x*z, x*z}
    o3 : List
\end{verbatim}
On the other hand, letting the \texttt{PerturbationTerm} be the default will not give a lift. This can be seen by Serre--Tate or by running \texttt{parametrizedLifts}.
\begin{verbatim}
    i5 : dim parametrizedLifts(2,I)
    o5 : -1
\end{verbatim}
Per Macaulay2 conventions, this means the solution set is empty. In particular, since the \texttt{findFrobeniusLift} method only tries random polynomials until it finds a solution, \texttt{findFrobeniusLift(2, I)} would run indefinitely. 
\end{example}

\section{Quasi-$F$-splittings}\label{section quasiFSplitting}
We now explain how to use our code to detect whether or not a given variety is quasi-$F$-split \emph{at the origin} and if so, what is its quasi-$F$-splitting height.
We recall the definition of a quasi-$F$-splitting. 
\begin{definition}[\cite{YobukoQuasiFSplittingLiftingCY}]
Let $R$ be a commutative Noetherian ring of characteristic $p>0$ and let $W_n(R)$ be its $n$-th Witt ring for any $n\in \N$. Let $\pi\colon W_n(R)\to R$ be the projection map onto the first coordinate and let $F\colon W_n(R)\to W_n(R)$ be the Frobenius map. The \emph{quasi-$F$-split} height $\height(R)$ of $R$ is the infimum number $n>0$ such that there exists a $W_n(R)$-module homomorphism $\phi$ which makes the following diagram commutative
\begin{center}
\begin{tikzcd}
W_n(R) \arrow[d,"\pi"'] \arrow[r, "F"] & F_* W_n(R) \arrow[dl,dashrightarrow, "\phi"]\\
R
\end{tikzcd}
\end{center}
\end{definition}

In \cite{KawakamiTakamatsuYoshikawaFedderQuasi}, Kawakami, Takamatsu, and Yoshikawa showed the existence of a Fedder-type criterion for quasi-$F$-splittings. We implement a version of it for complete intersections in our package. A faster implementation can be done in Julia, see \cite{BatubaraGarzellaPanFastAlgorithmQuasiFSplitting}. 

\begin{remark}
    We note that the only place our code uses computations in rings of Witt vectors is in the computation of $\Delta_1$ that appears in the theorem \cite[Theorem 4.11]{KawakamiTakamatsuYoshikawaFedderQuasi}. This computation could be done without using Witt vectors, as explained in \cite[Section 1.1]{KawakamiTakamatsuYoshikawaFedderQuasi}. We have included it in our package due to the relevance of Witt vectors in quasi-$F$-splitting.
\end{remark}

Let $R=\F_p[x_1,\ldots,x_n]$ and $I\coloneqq(f_1,\ldots,f_m)$ be an ideal of $R$ generated by a regular sequence. 
The method \texttt{fSplittingHeight} 
takes the ideal $I$ and will return the quasi-$F$-splitting height $\height(R/I)$ of $R/I$ \emph{at the origin} if it is less than the \texttt{MaxHeight} options or equal to infinity. It will not return anything otherwise. The default \texttt{MaxHeight} is 102, since Artin-Mazur heights of Calabi-Yau quintic threefolds are bounded above by
$102$, when finite. As shown in \cite[Example 6.7]{KawakamiTakamatsuYoshikawaFedderQuasi} varieties with arbitrarily high quasi-$F$-splitting height exist.

\begin{example}Quasi-$F$-splitting is only checked at the origin. In particular, we might get an \texttt{fSplittingHeight} of $1$ even if the ring is not $F$-pure (as long as it is $F$-pure at the origin).
\begin{verbatim}
    i1 : S = (ZZ/2)[x,y]
    o1 = S
    o1 : PolynomialRing
    i2 : I = ideal(x^2-y^3-1)
                 3    2
    o2 = ideal(y  + x  + 1)
    o2 : Ideal of S
    i3 : fSplittingHeight(I)
    o3 = 1
\end{verbatim}
\end{example}

\begin{remark}
The Fedder-type criterion of \cite[Theorem 4.11]{KawakamiTakamatsuYoshikawaFedderQuasi} works for rings defined over any field $k$ of characteristic $p>0$. However, Macaulay2 only works with the $k$-linear Frobenius whereas the Fedder-type criterion uses an absolute Frobenius. Since we want the two to coincide, we only work over $k=\F_p$.
\end{remark}

\begin{example} [\cite{KawakamiTakamatsuYoshikawaFedderQuasi}, Example 6.2] A Calabi-Yau surface of quasi-$F$-splitting height $5$.
\begin{verbatim}
    i1 : R = (ZZ/3)[x,y,z,w];
    i2 : I = ideal(x^4 + y^4 + z^4 + w^4 + x^3*z + z^3*w + y*z^2*w + y*z*w^2);
               4    4    3     4      2     3         2    4
    o2 = ideal(x  + y  + x z + z  + y*z w + z w + y*z*w  + w )
    o2 : Ideal of R
    i3 : fSplittingHeight(I)
    o3 = 5
\end{verbatim}

\end{example}

Let $R\coloneqq \F_p[x_1,\ldots,x_n]$, let $\fm:= (x_{1}, \ldots, x_n)$, and $f_1,\ldots,f_r$ be a regular sequence in $R$.
Set $I:=(f_1,\ldots,f_r)$ and $f:=f_1\cdots f_r$.
Our computation of $\height (R/I)_{\fm}$ is based on the ideal iteration in \cite[Theorem 4.11]{KawakamiTakamatsuYoshikawaFedderQuasi}.

For $g= \sum_i b_i M_i \in R$,where $b_i \in \F_p$ and the $(M_i)$ are distinct monomials, define $\Delta_1 (g) \in R$ by 
\[
(0,\Delta_1(g))=(g,0)-\Sigma(b_i M_i,0)
\]
in $W_2(R)$.
Before performing the iteration, the code first checks whether $f^{p-1}\notin \fm^{[p]}$, in which case the height is (1).
Otherwise, \cite[Corollary 4.19]{KawakamiTakamatsuYoshikawaFedderQuasi}, applied to $R_{\fm}$, shows that the height is infinite if either $f^{p-2} \in \fm^{[p]}$  or $((f^{p-2})+I^{[p]})
f^{p(p-2)}\Delta_1(f)
\subseteq\mathfrak m^{[p^2]}.$
These are the sufficient conditions checked by our code before starting the iteration. The code also detects infinite height if the sequence of ideals stabilizes while remaining contained in $\fm^{[
p
]}$. However, infinite height may remain undetected within the prescribed MaxHeight bound.

\begin{example} \ 
\begin{verbatim}
    i1 : R = (ZZ/3)[x,y,z,w];
    i2 : I = ideal(x^4 + y^4 + z^4 + w^4);
                4    4    4    4
    o2 = ideal(x  + y  + z  + w  )
    o2 : Ideal of R
    i3 : fSplittingHeight(I)
    o3 = infinity
    o3 : InfiniteNumber
\end{verbatim}
\end{example}

The ideal $I$ can be a complete intersection in place of a hypersurface:

\begin{example}\
\begin{verbatim}
    i1 : S = (ZZ/3)[x,y,z,w,u,s];
    i2 : I = ideal(x*y + y*w^5 + z*w^3 + x*y*z^3 + y*z*w^2,u*z-z*x^3)
                   5        3        2      3           3
    o2 = ideal (y*w  + x*y*z  + y*z*w  + z*w  + x*y, - x z + z*u)
                  ZZ
    o2 : Ideal of --[x..z, w, u, s]
                   3
    i3 : fSplittingHeight(I)
    o3 = 1
\end{verbatim}
\end{example}

\printbibliography

@PREAMBLE{


"\def\cfudot#1{\ifmmode\setbox7\hbox{$\accent"5E#1$}\else
\setbox7\hbox{\accent"5E#1}\penalty 10000\relax\fi\raise 1\ht7
\hbox{\raise.1ex\hbox to 1\wd7{\hss.\hss}}\penalty 10000
\hskip-1\wd7\penalty 10000\box7} "
}

@phdthesis{ZdanowiczThesis,
    author = {Maciej Zdanowicz},
    title = {On liftability of schemes and their Frobenius morphism},
    year         = 2017,
  school       = {University of Warsaw},
  type         = {PhD thesis}
}

@article{WittZyklischeCharp,
 author = {Witt, Ernst},
 title = {Zyklische {K{\"o}rper} und {Algebren} der {Charakteristik} {{\(p\)}} vom {Grad} {{\(p^n\)}}. {Struktur} diskret bewerteter perfekter {K{\"o}rper} mit vollkommenem {Restklassenk{\"o}rper} der {Charakteristik} {{\(p\)}}},
 fjournal = {Journal f{\"u}r die Reine und Angewandte Mathematik},
 journal = {J. Reine Angew. Math.},
 issn = {0075-4102},
 volume = {176},
 pages = {126--140},
 year = {1936},
 url = {https://eudml.org/doc/149985},
}

@ARTICLE{BatubaraGarzellaPanFastAlgorithmQuasiFSplitting,
       author = {{Batubara}, Ryan and {Garzella}, Jack J and {Pan}, Alex},
        title = "{K3 surfaces of any Artin-Mazur height over $\mathbb{F}_5$ and $\mathbb{F}_7$ via Quasi-F-split singularities and GPU acceleration}",
      journal = {arXiv e-prints},
         year = 2025,
        month = feb,
          eid = {arXiv:2502.12428},
        pages = {arXiv:2502.12428},
          doi = {10.48550/arXiv.2502.12428},
archivePrefix = {arXiv},
       eprint = {2502.12428},
 primaryClass = {math.AG},
       adsurl = {https://ui.adsabs.harvard.edu/abs/2025arXiv250212428B}
}

@article {IllusieComplexeDeRhamWittCristalline,
    AUTHOR = {Illusie, Luc},
     TITLE = {Complexe de de~{R}ham-{W}itt et cohomologie cristalline},
   JOURNAL = {Ann. Sci. \'Ecole Norm. Sup. (4)},
  FJOURNAL = {Annales Scientifiques de l'\'Ecole Normale Sup\'erieure.
              Quatri\`eme S\'erie},
    VOLUME = {12},
      YEAR = {1979},
    NUMBER = {4},
     PAGES = {501--661},
      ISSN = {0012-9593},
   MRCLASS = {14F30},
  MRNUMBER = {565469},
MRREVIEWER = {William\ E.\ Lang},
       URL = {http://www.numdam.org/item?id=ASENS_1979_4_12_4_501_0},
}

@ARTICLE{MunozBertrandFasterComputationWittVectors,
       author = {{Mu{\~n}oz-\relax-Bertrand}, Rub{\'e}n},
        title = "{Faster computation of Witt vectors over polynomial rings}",
      journal = {arXiv e-prints},
         year = 2025,
        month = apr,
          eid = {arXiv:2504.01834},
        pages = {arXiv:2504.01834},
          doi = {10.48550/arXiv.2504.01834},
archivePrefix = {arXiv},
       eprint = {2504.01834},
 primaryClass = {math.AC},
       adsurl = {https://ui.adsabs.harvard.edu/abs/2025arXiv250401834M}
}

@article {YobukoQuasiFSplittingLiftingCY,
    AUTHOR = {Yobuko, Fuetaro},
     TITLE = {Quasi-{F}robenius splitting and lifting of {C}alabi-{Y}au
              varieties in characteristic {$p$}},
   JOURNAL = {Math. Z.},
  FJOURNAL = {Mathematische Zeitschrift},
    VOLUME = {292},
      YEAR = {2019},
    NUMBER = {1-2},
     PAGES = {307--316},
      ISSN = {0025-5874,1432-1823},
   MRCLASS = {14J32 (13F35 14G17)},
  MRNUMBER = {3968903},
MRREVIEWER = {Tyler\ L.\ Kelly},
       DOI = {10.1007/s00209-018-2198-7},
       URL = {https://doi.org/10.1007/s00209-018-2198-7},
}

@article {FinottiComputationsWittVectorsLength3,
    AUTHOR = {Finotti, Lu\'is R. A.},
     TITLE = {Computations with {W}itt vectors of length 3},
   JOURNAL = {J. Th\'eor. Nombres Bordeaux},
  FJOURNAL = {Journal de Th\'eorie des Nombres de Bordeaux},
    VOLUME = {23},
      YEAR = {2011},
    NUMBER = {2},
     PAGES = {417--454},
      ISSN = {1246-7405,2118-8572},
   MRCLASS = {13F35},
  MRNUMBER = {2817938},
MRREVIEWER = {Markus\ Szymik},
       DOI = {10.5802/jtnb.770},
       URL = {https://doi.org/10.5802/jtnb.770},
}

@Article{JoyalDeltaAnneauWitt,
 Author = {Joyal, Andr{\'e}},
 Title = {{{\(\delta\)}}-anneaux et vecteurs de Witt. ({{\(\delta\)}}-rings and {Witt} vectors)},
 FJournal = {Comptes Rendus Math{\'e}matiques de l'Acad{\'e}mie des Sciences},
 Journal = {C. R. Math. Acad. Sci., Soc. R. Can.},
 ISSN = {0706-1994},
 Volume = {7},
 Pages = {177--182},
 Year = {1985},
}

@book{JacobsonBasicAlgebraII,
  title={Basic Algebra II: Second Edition},
  author={Jacobson, N.},
  isbn={9780486135212},
  series={Dover Books on Mathematics},
  year={2012},
  publisher={Dover Publications}
}

@BOOK{SerreLocalFields,
  title = {Local fields},
  publisher = {Springer-Verlag},
  year = {1979},
  author = {Serre, Jean-Pierre},
  volume = {67},
  pages = {viii+241},
  series = {Graduate Texts in Mathematics},
  address = {New York},
  note = {Translated from the French by Marvin Jay Greenberg},
  isbn = {0-387-90424-7},
  mrclass = {12Bxx},
  mrnumber = {MR554237 (82e:12016)}
}

@ARTICLE{KawakamiTakamatsuYoshikawaFedderQuasi,
       author = {{Kawakami}, Tatsuro and {Takamatsu}, Teppei and {Yoshikawa}, Shou},
        title = "{Fedder type criteria for quasi-$F$-splitting}",
      journal = {arXiv e-prints},
         year = 2022,
        month = apr,
          eid = {arXiv:2204.10076},
        pages = {arXiv:2204.10076},
          doi = {10.48550/arXiv.2204.10076},
archivePrefix = {arXiv},
       eprint = {2204.10076},
 primaryClass = {math.AG},
       adsurl = {https://ui.adsabs.harvard.edu/abs/2022arXiv220410076K}
}

@article{ChambertLoirSurvey,
  author  = {Chambert-Loir, Antoine},
  title   = {Cohomologie cristalline : un survol},
  journal = {Expositiones Mathematicae},
  volume  = {16},
  number  = {4},
  pages   = {333--382},
  year    = {1998},
}

@misc{LubinSerreTate1964,
  author    = {Lubin, Jonathan and Serre, Jean-Pierre and Tate, John},
  title     = {Elliptic curves and formal groups},
  howpublished = {Mimeographed notes, AMS Summer Institute in Algebraic Geometry},
  year      = {1964},
  address   = {Woods Hole}
}

\end{document}